\documentclass[12pt]{amsart}

\setlength{\textwidth}{150mm}
\setlength{\textheight}{220mm}
\setlength{\oddsidemargin}{5.5mm}
\setlength{\evensidemargin}{5.5mm}

\usepackage{amssymb}
\usepackage{amscd}

\title{Algebraic curves with collinear Galois points} 
\author{Satoru Fukasawa}

\subjclass[2010]{14H50, 14H05, 14H37}
\keywords{Galois point, plane curve, Galois group, automorphism group}
\address{Department of Mathematical Sciences, Faculty of Science, Yamagata University, Kojirakawa-machi 1-4-12, Yamagata 990-8560, Japan} 
\email{s.fukasawa@sci.kj.yamagata-u.ac.jp}

\thanks{The author was partially supported by JSPS KAKENHI Grant Number JP19K03438.}

\newtheorem{theorem}{Theorem}
\newtheorem{proposition}{Proposition}
\newtheorem{corollary}{Corollary} 
\newtheorem{fact}{Fact}

\theoremstyle{definition}

\begin{document}
\begin{abstract} 
A criterion for the existence of a birational embedding into a projective plane with three collinear Galois points for algebraic curves is presented. 
The extendability of an automorphism induced by a Galois point to a linear transformation of the projective plane is also discussed, under the assumption that two Galois points exist. 
\end{abstract}

\maketitle 

\section{Introduction} 
Let $X$ be a (reduced, irreducible) smooth projective curve over an algebraically closed field $k$ of characteristic $p \ge 0$ and let $k(X)$ be its function field. 
We consider a morphism $\varphi: X \rightarrow \mathbb{P}^2$, which is birational onto its image.
In this situation, Hisao Yoshihara introduced the notion of a Galois point.  
A point $P \in \mathbb{P}^2$ is called a {\it Galois point}, if the field extension $k(\varphi(X))/\pi_P^*k(\mathbb{P}^1)$ of function fields induced by the projection $\pi_P$ from $P$ is a Galois extension (\cite{miura-yoshihara, yoshihara}).  
Furthermore, a Galois point $P$ is said to be inner (resp. outer), if $P \in \varphi(X) \setminus {\rm Sing}(\varphi(X))$ (resp. if $P \in \mathbb{P}^2 \setminus \varphi(X)$). 

A criterion for the existence of a birational embedding with two Galois points was described by the present author (\cite{fukasawa2}). 
It is a natural problem to find a condition for the existence of {\it three} Galois points (see also \cite{open}). 
Non-collinear Galois points were considered in \cite{fukasawa3}. 
In this article, (three) collinear Galois points are studied. 
The associated Galois group is denoted by $G_P$, when $P$ is a Galois point.
The following criterion is presented. 

\begin{theorem} \label{main} 
Let $G_1$, $G_2$ and $G_3 \subset {\rm Aut}(X)$ be finite subgroups of order at least three, and let $P_1$, $P_2$ and $P_3$ be different points of $X$. 
Then, four conditions
\begin{itemize}
\item[(a)] $X/{G_i} \cong \Bbb P^1$ for $i=1, 2, 3$,    
\item[(b)] $G_i \cap G_j=\{1\}$ for any $i, j$ with $i \ne j$,  
\item[(c)] there exists a divisor $D$ such that $D=P_i+\sum_{\sigma \in G_i}\sigma(P_j)$ for any $i, j$ with $i \ne j$, and
\item[(d)] $\dim \Lambda \le 2$, for the smallest sublinear system $\Lambda$ of $|D|$ such that $D, P_i+\sum_{\sigma \in G_i}\sigma(P_i) \in \Lambda$ for $i=1, 2, 3$
\end{itemize}
are satisfied, if and only if there exists a birational embedding $\varphi: X \rightarrow \mathbb P^2$ of degree $|G_1|+1$ such that $\varphi(P_1)$, $\varphi(P_2)$ and $\varphi(P_3)$ are three collinear inner Galois points for $\varphi(X)$ and $G_{\varphi(P_i)}=G_i$ for $i=1, 2, 3$. 
\end{theorem}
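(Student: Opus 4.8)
The plan is to pass back and forth between the three projections and linear systems on $X$. Write $n$ for the common order $|G_1|=|G_2|=|G_3|$ — this is forced by (c), since each identity $D=P_i+\sum_{\sigma\in G_i}\sigma(P_j)$ $(i\ne j)$ gives $\deg D=n+1$ — put $D_i:=P_i+\sum_{\sigma\in G_i}\sigma(P_i)$, and let $\psi_i\colon X\to X/G_i\cong\mathbb P^1$ be the quotient morphism, of degree $n$, whose fibre through a point $Q$ is the orbit divisor $\sum_{\sigma\in G_i}\sigma(Q)$ and for which $\psi_i^*|\mathcal O_{\mathbb P^1}(1)|$ is a base-point-free pencil of degree $n$.

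For the ``if'' direction, first $\dim\Lambda=2$: ``$\le$'' is (d), and ``$\ge$'' because the pencils $P_i+\psi_i^*|\mathcal O_{\mathbb P^1}(1)|\subseteq\Lambda$ (each contains $D$ and $D_i$ by (c)) are pairwise distinct, their base points being the distinct points $P_1,P_2,P_3$. Let $W\subseteq H^0(X,\mathcal O_X(D))$ be the $3$-dimensional space with $\mathbb P(W)=\Lambda$ and $\varphi\colon X\to\mathbb P^2$ the attached morphism (one checks $|W|$ is base-point-free). The key point is that $\pi_{\varphi(P_i)}\circ\varphi=\psi_i$. Indeed, the members of $|W|$ through $\varphi(P_i)$ form a pencil containing $D$ and $D_i$, and $\gcd(D,D_i)=P_i$ — one checks from (c) that $P_i$ occurs in $D$ and in $D_i$ to the same total order and that nothing else is common — so the moving part of this pencil is base-point-free of degree $n$ and contains the two divisors $D-P_i=\sum_{\sigma\in G_i}\sigma(P_j)=\psi_i^*(\psi_i(P_j))$ and $D_i-P_i=\sum_{\sigma\in G_i}\sigma(P_i)=\psi_i^*(\psi_i(P_i))$; these are pullbacks of two distinct points of $\mathbb P^1$, hence span $\psi_i^*|\mathcal O_{\mathbb P^1}(1)|$, which is therefore the moving pencil. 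So $\pi_{\varphi(P_i)}\circ\varphi=\psi_i$ is Galois of degree $n$ with group $G_i$. Now compare degrees: $n+1=\deg D=(\deg\varphi)\cdot\deg\varphi(X)$ while $n=\deg\psi_i=(\deg\varphi)\cdot\bigl(\deg\varphi(X)-m_{\varphi(P_i)}\bigr)$ with $m_{\varphi(P_i)}\ge1$ the multiplicity of $\varphi(P_i)$ on $\varphi(X)$, forcing $\deg\varphi=m_{\varphi(P_i)}=1$. Hence $\varphi$ is birational, $\deg\varphi(X)=n+1$, each $\varphi(P_i)$ is a smooth — hence inner — point with $G_{\varphi(P_i)}=G_i$ (in particular $\varphi(P_1),\varphi(P_2),\varphi(P_3)$ are distinct, being smooth), and since $D\ge P_1+P_2+P_3$ by (c) the line of $\mathbb P^2$ corresponding to $D\in|W|$ passes through all three, so they are collinear.

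For the ``only if'' direction, let $\ell$ be the common line and $D:=\varphi^*\ell$. Each $\pi_{\varphi(P_i)}\circ\varphi$ is Galois with group $G_i$ and, $\varphi$ being birational, of degree $\deg\varphi(X)-1$, giving (a) and $|G_i|=n:=\deg\varphi(X)-1$. With $\ell$ meeting $\varphi(X)$ transversally at each $\varphi(P_i)$, one has $\varphi^*\ell=P_i+(\text{the }\psi_i\text{-fibre through }P_j)=P_i+\sum_{\sigma\in G_i}\sigma(P_j)$ for all $i\ne j$, which is (c); and $\varphi^*|\mathcal O_{\mathbb P^2}(1)|$ is a $2$-dimensional sublinear system of $|D|$ containing $D$ and each tangent-line pullback $\varphi^*T_{\varphi(P_i)}=P_i+\psi_i^*(\psi_i(P_i))=D_i$, so $\Lambda\subseteq\varphi^*|\mathcal O_{\mathbb P^2}(1)|$ and $\dim\Lambda\le2$, which is (d). For (b), a common element $\sigma\in G_i\cap G_j$ is a deck transformation of both $\psi_i$ and $\psi_j$; using that $D-P_i=\sum_\rho\rho(P_j)$ and $D-P_j=\sum_\rho\rho(P_i)$ are $G_i$- resp. $G_j$-invariant one gets $\sigma_*D=D+\sigma(P_i)-P_i=D+\sigma(P_j)-P_j$, which forces $\sigma$ to fix $P_j$; but then $\sigma\in\mathrm{Stab}_{G_i}(P_j)$, of order $\mathrm{ord}_{P_j}D=1$, so $\sigma=1$.

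The step I expect to be the main obstacle is the ``if'' direction's identification $\pi_{\varphi(P_i)}\circ\varphi=\psi_i$ together with the resulting birationality of $\varphi$: this hinges on the precise claim that the base locus of the subpencil of $|W|$ through $\varphi(P_i)$ is exactly $P_i$ — proved from (c), and what also keeps the three points from colliding — and on recognising the relevant pencil as a pullback from $X/G_i$. One must additionally deal with the degenerate situations that the clean argument skips over (nontrivial point-stabilizers, or the common line being tangent to $\varphi(X)$ at some $\varphi(P_i)$, which forces the three points to lie in a single $G_i$-orbit), and, in positive characteristic, check that the ramification of each $\psi_i$ over $\psi_i(P_j)$ is tame so that $\sum_{\sigma\in G_i}\sigma(P_j)$ is genuinely the fibre divisor.
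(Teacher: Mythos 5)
There is a genuine gap, and it sits exactly at the step you yourself single out as the crux. Your whole construction in the direction (a)--(d) $\Rightarrow$ embedding rests on the claim that $\gcd(D,D_i)=P_i$, equivalently that $G_iP_i\ne G_iP_j$ (no $P_j$ lies in the $G_i$-orbit of $P_i$); this is what gives base-point-freeness of $\Lambda$, the identification of the subpencil of $\Lambda$ through $P_i$ with $P_i+\psi_i^*|\mathcal{O}_{\mathbb{P}^1}(1)|$, hence $\pi_{\varphi(P_i)}\circ\varphi=\psi_i$ and your degree count. You assert that ``one checks from (c)'' that nothing beyond $P_i$ is common, but condition (c) alone does not rule out $P_j\in G_iP_i$: in that case $D=D_i$ and all three points lie in one $G_i$-orbit, which is perfectly consistent with the divisor identities a priori (and note that when it fails, the multiplicities of $P_i$ in $D$ and in $D_i$ are in general \emph{not} equal, since ${\rm ord}_{P_i}D_i=1+|{\rm Stab}_{G_i}(P_i)|$, so even the parenthetical claim is off). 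A tell-tale sign is that your argument never uses the hypothesis $|G_i|\ge 3$. The paper needs that hypothesis precisely here: it first applies the two-Galois-point criterion (Fact \ref{criterion1}) to (a), (b), (c) to obtain a plane model $\varphi_{12}$ of degree $|G_1|+1\ge 4$ with $\varphi_{12}(P_1),\varphi_{12}(P_2)$ inner Galois, and then invokes Fact \ref{inner-lemma} (\cite{fukasawa1}, Lemma 2.5): for degree at least $4$ the line through two inner Galois points is not tangent at either, which is exactly the statement $G_1P_1\ne G_1P_2$. Some such geometric input is unavoidable --- for $|G_i|=2$ (a smooth cubic in characteristic $\ne 2$, where every smooth point is an inner Galois point) the tangency configuration really occurs --- so the degenerate case you defer to the final paragraph is not a routine clean-up but the missing lemma. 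Once it is supplied, your finish (building $\varphi$ directly from $\Lambda$ and proving birationality by the count $n+1=(\deg\varphi)\deg\varphi(X)$, $n=(\deg\varphi)(\deg\varphi(X)-m)$) is a legitimate alternative to the paper's route, which instead takes $\varphi_{12}=(f:g:1)$ from Fact \ref{criterion1} and uses (d) only to show $h\in\langle f,g,1\rangle$, i.e.\ that the third quotient map is a projection of the already-constructed model.

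Two smaller points in the converse direction. Your from-scratch proof of (b) ends with ``${\rm Stab}_{G_i}(P_j)$, of order ${\rm ord}_{P_j}D=1$'', but ${\rm ord}_{P_j}D=1$ is again the non-tangency statement; it is true here (degree $\ge 4$, Fact \ref{inner-lemma}), but not by anything you have argued --- the paper simply gets (a) and (b) from the converse of Fact \ref{criterion1}. Likewise the transversality of $\ell$ at $\varphi(P_i)$ that you assume when deriving (c) needs the same citation, although it is in fact not needed: for a smooth point of a birational model one always has $\varphi^*\ell=P_i+(\text{fibre of }\pi_{\varphi(P_i)}\circ\varphi)$, and for a Galois cover the fibre divisor through $Q$ equals $\sum_{\sigma\in G_i}\sigma(Q)$ whether or not the ramification is tame (the ramification index equals the order of the stabilizer), so your positive-characteristic worry is also moot.
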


\begin{theorem} \label{main-outer}
Let $G_1$, $G_2$ and $G_3 \subset {\rm Aut}(X)$ be finite subgroups, and let $Q$ be a point of $X$. 
Then, four conditions
\begin{itemize}
\item[(a)] $X/{G_i} \cong \Bbb P^1$ for $i=1, 2, 3$,    
\item[(b)] $G_i \cap G_j=\{1\}$ for any $i, j$ with $i \ne j$,  
\item[(c')] there exists a divisor $D$ such that $D=\sum_{\sigma \in G_i}\sigma(Q)$ for $i=1, 2, 3$, and
\item[(d')] $\dim \Lambda \le 2$, for the smallest sublinear system $\Lambda$ of $|D|$ such that $\Lambda_1 \cup \Lambda_2 \cup \Lambda_3 \subset \Lambda$, where $\Lambda_i$ is the base-point-free linear system induced by the covering map $X \rightarrow X/G_i \cong \mathbb{P}^1$ for $i=1, 2, 3$
\end{itemize}
are satisfied, if and only if there exists a birational embedding $\varphi: X \rightarrow \mathbb P^2$ of degree $|G_1|$ and three collinear outer Galois points $P_1, P_2$ and $P_3$ exist for $\varphi(X)$ such that $G_{P_i}=G_i$ for $i=1, 2, 3$, and $\overline{P_1P_2} \ni \varphi(Q)$, where $\overline{P_1P_2}$ is the line passing through $P_1$ and $P_2$. 
\end{theorem}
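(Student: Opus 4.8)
The plan is to prove both implications through the standard dictionary between base-point-free linear systems on $X$ and linear projections of $\mathbb P^2$, in parallel with the proof of Theorem~\ref{main} but with the inner-point degree count replaced by the outer one. Write $\pi_i\colon X\to X/G_i\cong\mathbb P^1$ for the quotient maps. For sufficiency, assume (a), (b), (c'), (d'). By (c') one gets $\deg D=|G_i|$ for each $i$, so the three groups have a common order; I may assume it is $\ge 2$, since if all $G_i$ are trivial then $X\cong\mathbb P^1$ and one just takes $\varphi(X)$ to be a line through the image of $Q$. Each $\Lambda_i$ is the base-point-free pencil $\{\pi_i^*(q):q\in\mathbb P^1\}$, and since the ramification index of a quotient map at a point equals the order of that point's stabilizer, $\pi_i^*(\pi_i(Q))=\sum_{\sigma\in G_i}\sigma(Q)=D$; hence $D\in\Lambda_1\cap\Lambda_2\cap\Lambda_3$. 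If $\Lambda_1=\Lambda_2$ then $\pi_1$ and $\pi_2$ share their fibres, so $k(X)^{G_1}=k(X)^{G_2}$ and $G_1=G_2$, against (b); thus $\Lambda_1\ne\Lambda_2$, and with (d') this forces $\dim\Lambda=2$. As $\Lambda\supseteq\Lambda_1$ it is base-point-free, so it defines a morphism $\varphi:=\varphi_\Lambda\colon X\to\mathbb P^2$ with $\deg\varphi(X)\cdot\deg\varphi=\deg D=|G_1|$.

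Next I would identify the points: to each member of $\Lambda$ corresponds a line of $\mathbb P^2$ (its preimage under $\varphi$), and the pencil $\Lambda_i$ corresponds to the point $P_i$ through which all the associated lines pass; thus the members of $\Lambda_i$ are the $\varphi^*(L)$ with $L$ a line through $P_i$, so the projection $\pi_{P_i}$ from $P_i$ satisfies $\pi_{P_i}\circ\varphi=\alpha_i\circ\pi_i$ for some $\alpha_i\in\mathrm{Aut}(\mathbb P^1)$. Then $\pi_{P_1}\circ\varphi$ is separable, so $\varphi$ is separable; and if $\deg\varphi=m\ge 2$, a general fibre $\{x_1,\dots,x_m\}$ of $\varphi$ lies in a single fibre of $\pi_1$ and in a single fibre of $\pi_2$, whence $x_j=\sigma_j(x_1)=\tau_j(x_1)$ with $\sigma_j\in G_1$, $\tau_j\in G_2$; since a general $x_1$ is fixed by no nontrivial automorphism, $\sigma_j=\tau_j\in G_1\cap G_2=\{1\}$ and $m=1$. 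So $\varphi$ is a birational embedding whose image has degree $|G_1|$. From $\pi_{P_i}\circ\varphi=\alpha_i\circ\pi_i$ one gets $\pi_{P_i}^*k(\mathbb P^1)=k(X)^{G_i}$ inside $k(\varphi(X))=k(X)$, so $P_i$ is Galois with $G_{P_i}=G_i$; and $\deg(\pi_{P_i}|_{\varphi(X)})=|G_i|=\deg\varphi(X)$, so from $\deg(\pi_{P}|_C)=\deg C-\mathrm{mult}_P(C)$ one gets $\mathrm{mult}_{P_i}(\varphi(X))=0$, i.e.\ $P_i$ is outer; the $P_i$ are distinct because $P_i=P_j$ would force $G_i=G_j$. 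Finally, the member $D$ of $\Lambda$ corresponds to a line $\ell$, and $D\in\Lambda_i$ means $\ell\ni P_i$, so $P_1,P_2,P_3$ are collinear on $\ell=\overline{P_1P_2}$; as $Q$ lies in the support of $D=\varphi^*\ell$, we have $\varphi(Q)\in\ell$.

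For necessity, suppose such a $\varphi$ is given; I would set $\ell:=\overline{P_1P_2}$ (which contains $P_3$ and $\varphi(Q)$) and $D:=\varphi^*\ell$. For each $i$, outerness of $P_i$ gives $\deg(\pi_{P_i}|_{\varphi(X)})=\deg\varphi(X)=|G_1|$, while $P_i$ Galois with group $G_i$ gives that this degree is $|G_i|$; hence $|G_i|=|G_1|$ and $k(X)^{G_i}=\pi_{P_i}^*k(\mathbb P^1)\cong k(\mathbb P^1)$, which is (a). For (b), if $1\ne\sigma\in G_i\cap G_j$, then $\sigma$ lies in the Galois groups of $\pi_{P_i}\circ\varphi$ and of $\pi_{P_j}\circ\varphi$, so for general $x$ the point $\varphi(\sigma x)$ lies on both $\overline{P_i\varphi(x)}$ and $\overline{P_j\varphi(x)}$, which are distinct since $P_i\ne P_j$; hence $\varphi(\sigma x)=\varphi(x)$, and $\sigma=1$ as $\varphi$ is birational. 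Since $\ell$ passes through every $P_i$, $D=\varphi^*\ell$ is a fibre of $\pi_{P_i}\circ\varphi=\alpha_i\circ\pi_i$, and because $\varphi(Q)\in\ell$ this fibre is $\pi_i^*(\pi_i(Q))=\sum_{\sigma\in G_i}\sigma(Q)$, which is (c'). Finally $\Sigma:=\varphi^*|\mathcal O_{\mathbb P^2}(1)|$ is a $2$-dimensional base-point-free linear subsystem of $|D|$ containing $\Lambda_1\cup\Lambda_2\cup\Lambda_3$, so $\dim\Lambda\le\dim\Sigma=2$, which is (d').

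The hardest parts will be the birationality of $\varphi_\Lambda$ — where separability has to be read off from the Galois cover $\pi_1$ and generic injectivity from condition (b) — and the verification that the points $P_i$ cut out by the pencils $\Lambda_i$ are genuinely \emph{outer}; this last rests on the degree identity $\deg(\pi_{P_i}|_{\varphi(X)})=\deg\varphi(X)$, and it is exactly this identity (versus $\deg\varphi(X)-1$ for an inner point) that produces the degree $|G_1|$ here in place of $|G_1|+1$ in Theorem~\ref{main}.
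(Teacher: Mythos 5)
Your proposal is correct, and it reaches the statement by a route that differs in structure from the paper's. The paper's proof of the only-if (construction) direction is not symmetric in the three points: it first produces generators $f,g,h$ of the fixed fields $k(X)^{G_i}$ with $(f)_\infty=(g)_\infty=(h)_\infty=D$, invokes Fact \ref{criterion2} (the two-point criterion of \cite{fukasawa2}) to conclude that $\varphi=(f:g:1)$ is already a birational embedding with outer Galois points $P_1,P_2$, and then uses condition (d') only to force $h\in\langle f,g,1\rangle$, so that the third quotient map is the projection from a third point of $\overline{P_1P_2}$. You instead take the net $\Lambda$ itself, note it is base-point-free of dimension exactly $2$, let $\varphi=\varphi_\Lambda$, and re-derive from scratch what Fact \ref{criterion2} supplies: separability of $\varphi$ from the Galois pencil $\pi_1$, generic injectivity from condition (b) via the finiteness of fixed points of nontrivial automorphisms, identification of each $P_i$ as the common point of the pencil of lines corresponding to $\Lambda_i\subset\Lambda$, Galoisness with $G_{P_i}=G_i$ from the factorization $\pi_{P_i}\circ\varphi=\alpha_i\circ\pi_i$, and outerness from the degree identity (one can also get outerness even more directly: if $P_i\in\varphi(X)$, all members $\varphi^*L$, $L\ni P_i$, of $\Lambda_i$ would share $\varphi^{-1}(P_i)$, contradicting base-point-freeness of $\Lambda_i$; this also removes the slight ordering issue of asserting the factorization through $\pi_i$ before knowing $P_i\notin\varphi(X)$). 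Your necessity direction likewise proves (a) and (b) directly rather than quoting Fact \ref{criterion2}, using that the orbit divisor $\sum_{\sigma\in G_i}\sigma(Q)$ is the fibre $\varphi^*\overline{P_1P_2}$ of the projection. The trade-off: the paper's argument is shorter because it treats the two-point case as a black box, while yours is self-contained, treats the three points symmetrically, and makes explicit the dictionary between pencils inside the net and points of the plane (which is also the mechanism behind Proposition \ref{uniqueness}); the degenerate remark about trivial $G_i$ is harmless and outside the intended scope anyway.
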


The uniqueness of the birational embedding constructed in \cite{fukasawa2} is also proved. 

\begin{proposition} \label{uniqueness} 
Assume that the orders of groups $G_1$ and $G_2$ in Facts \ref{criterion1} and \ref{criterion2} are at least three. 
Then, a morphism $\varphi$ described in Fact \ref{criterion1} (resp. in Fact \ref{criterion2}) is uniquely determined by a $4$-tuple $(G_1, G_2, P_1, P_2)$ (resp. by a $3$-tupble $(G_1, G_2, Q)$), up to a projective equivalence. 
\end{proposition}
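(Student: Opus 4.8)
The plan is to pass from birational morphisms to linear systems: a birational morphism $\varphi\colon X\to\mathbb{P}^2$ whose image has degree $d$ is the same datum as a base-point-free net $\Lambda:=\varphi^{*}|\mathcal{O}_{\mathbb{P}^2}(1)|$ inside a complete linear system $|D|$ with $\deg D=d$ and $\Lambda$ generating $k(X)$, together with a choice of projective frame on $\mathbb{P}^2$; two such morphisms are projectively equivalent if and only if the divisor classes agree and the nets $\Lambda$ coincide (changing the frame corresponds to acting by $\mathrm{PGL}_3(k)$, which fixes $\Lambda$). Hence it suffices to show that $\Lambda$, together with the class of $D$, is already determined by the tuple $(G_1,G_2,P_1,P_2)$ in the situation of Fact~\ref{criterion1}, and by $(G_1,G_2,Q)$ in the situation of Fact~\ref{criterion2}.

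First I would handle Fact~\ref{criterion1}. Fix $i\in\{1,2\}$. Since $\varphi(P_i)$ is an \emph{inner} Galois point it is a smooth point of $\varphi(X)$, so $\varphi$ is a local isomorphism at $P_i$ and $\varphi^{-1}(\varphi(P_i))=P_i$ as a reduced divisor; therefore a general line through $\varphi(P_i)$ meets $\varphi(X)$ at $\varphi(P_i)$ with multiplicity one. The projection from $\varphi(P_i)$ composed with $\varphi$ is a morphism $\psi_i\colon X\to\mathbb{P}^1$ of degree $|G_i|$ whose associated extension is Galois with group $G_{\varphi(P_i)}=G_i$, so $\psi_i^{*}k(\mathbb{P}^1)=k(X)^{G_i}$ and $\psi_i$ is, up to an automorphism of the target, the quotient morphism $X\to X/G_i$. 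Consequently the pull-back of the pencil of lines through $\varphi(P_i)$ is the pencil $\mathfrak{d}_i$ whose general member is $P_i+\sum_{\sigma\in G_i}\sigma(R)$ for general $R\in X$, and this pencil depends only on the pair $(G_i,P_i)$; in particular the class of $D$, being that of $P_1+\sum_{\sigma\in G_1}\sigma(R)$, is determined. As $\varphi$ is birational and $\varphi(P_1),\varphi(P_2)$ are smooth points of $\varphi(X)$ we have $\varphi(P_1)\ne\varphi(P_2)$ (else $P_1=P_2$), so $\mathfrak{d}_1$ and $\mathfrak{d}_2$ are distinct pencils; both lie in the net $\Lambda$, and two distinct pencils contained in a net (two distinct lines in the $\mathbb{P}^2$ parametrizing it) span it. Hence $\Lambda$ is the smallest linear system containing $\mathfrak{d}_1\cup\mathfrak{d}_2$, which is intrinsic to $(G_1,G_2,P_1,P_2)$; any birational embedding attached to this tuple as in Fact~\ref{criterion1} induces the same $\Lambda$, hence is projectively equivalent to $\varphi$.

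Then I would treat Fact~\ref{criterion2} in exactly the same way. Now $\varphi(P_i)\notin\varphi(X)$, so the projection from $\varphi(P_i)$ composed with $\varphi$ has degree $|G_i|$ and again coincides, up to $\mathrm{Aut}(\mathbb{P}^1)$, with the quotient $X\to X/G_i$; the lines through $\varphi(P_i)$ now pull back to the base-point-free pencil $\mathfrak{e}_i$ whose general member is $\sum_{\sigma\in G_i}\sigma(R)$, a pencil depending only on $G_i$. Since a pencil determines its map to $\mathbb{P}^1$ up to $\mathrm{Aut}(\mathbb{P}^1)$, hence the subfield $k(X)^{G_i}$ and so the subgroup $G_i$, and since the Galois points $P_1,P_2$ are distinct (so $G_1\ne G_2$), the pencils $\mathfrak{e}_1$ and $\mathfrak{e}_2$ are distinct and $\Lambda$ is again the smallest linear system containing $\mathfrak{e}_1\cup\mathfrak{e}_2$, determined by $(G_1,G_2)$. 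The point $Q$ merely pins down the representative $D=\sum_{\sigma\in G_i}\sigma(Q)$ inside its class and is irrelevant to the projective equivalence class of $\varphi$; this is why $(G_1,G_2,Q)$ suffices.

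I expect the main obstacle to be the explicit identification of the pull-back of the pencil of lines through a Galois point with the orbit pencil $\mathfrak{d}_i$ (respectively $\mathfrak{e}_i$): one must verify that the residual intersection of a general such line is exactly one full $G_i$-orbit --- in the inner case this rests on the multiplicity-one statement at $\varphi(P_i)$, which is where the hypotheses ``inner'' and ``birational'' are genuinely used --- and that every $G_i$-orbit occurs, i.e.\ that $\psi_i$ is surjective. A secondary, bookkeeping point is to phrase ``projective equivalence $\Longleftrightarrow$ equality of nets'' so that the a priori ambiguity of the divisor class $D$ is eliminated, which is settled by the observation above that the class of $D$ is itself forced. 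The assumption $|G_1|,|G_2|\ge 3$ is the standing hypothesis of Facts~\ref{criterion1} and~\ref{criterion2}, which I would carry along unchanged.
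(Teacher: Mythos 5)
Your proposal is correct, and its overall strategy is the same as the paper's: reduce projective equivalence to the statement that the net $\Lambda=\varphi^*|\mathcal{O}_{\mathbb{P}^2}(1)|\subset|D|$ is intrinsically determined by the Galois data. The difference is in how you pin $\Lambda$ down in the inner case. The paper fixes three distinguished \emph{divisors} in $\Lambda$ --- the divisor $D$ cut by the line $\overline{\varphi(P_1)\varphi(P_2)}$ and the two tangent-line divisors $P_i+\sum_{\sigma\in G_i}\sigma(P_i)$ --- and needs Fact~\ref{inner-lemma} (hence $\deg\varphi(X)\ge 4$, i.e.\ the hypothesis $|G_i|\ge 3$) to guarantee these three divisors are in general position, so that the net containing them is unique. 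You instead take the two whole \emph{pencils} $\mathfrak{d}_i=P_i+\mathfrak{g}_i$, where $\mathfrak{g}_i$ is the pencil of the quotient $X\to X/G_i$, identify them with the pull-backs of the pencils of lines through $\varphi(P_1),\varphi(P_2)$ (this is where ``inner'' and multiplicity one at the smooth point $\varphi(P_i)$ are used), and observe that two distinct pencils inside a net span it; distinctness is immediate because the fixed parts of $\mathfrak{d}_1,\mathfrak{d}_2$ are $P_1\neq P_2$. This bypasses Fact~\ref{inner-lemma} entirely in the inner case (and, as a by-product, does not actually need $|G_i|\ge 3$ there), which is a genuine, slightly cleaner variant; your outer case is essentially identical to the paper's (the net is spanned by the two quotient pencils $\Lambda_1,\Lambda_2$, and $Q$ only fixes the representative $D$). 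One small repair: in the outer case you justify $\mathfrak{e}_1\neq\mathfrak{e}_2$ by ``$P_1\neq P_2$, so $G_1\neq G_2$,'' which as stated needs an argument; the clean justification is either condition (b) together with $|G_i|\ge 3$ (if $G_1=G_2$ then $G_1=G_1\cap G_2=\{1\}$), or the remark that distinct points of $\mathbb{P}^2$ give distinct pencils of lines and $H^0(\mathbb{P}^2,\mathcal{O}(1))\to H^0(X,D)$ is injective since $\varphi(X)$ is not a line. With that sentence fixed, the argument is complete.
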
 

Using (the proof of) this Proposition, the following criterion for the extendability of an automorphism $\sigma \in G_P$ for an inner Galois point $P$ is presented. 

\begin{proposition} \label{extendable} 
Let $\deg \varphi(X) \ge 4$, let $\varphi(P_1)$ and $\varphi(P_2) \in \varphi(X) \subset \mathbb{P}^2$ be different inner Galois points, and let $\sigma \in G_{\varphi(P_1)}$ satisfy $P_3=\sigma(P_2)$. 
Then, there exists a linear transformation $\tilde{\sigma}$ of $\mathbb{P}^2$ such that $\varphi^{-1}\circ\tilde{\sigma}\circ\varphi=\sigma$, if and only if three conditions
\begin{itemize}
\item[(a)] $\sigma(P_1)=P_1$, 
\item[(b)] $\varphi(P_3)$ is an inner Galois point, and 
\item[(c)] $\sigma^*(P_3+\sum_{\gamma \in G_{\varphi(P_3)}}\gamma(P_3))=P_2+\sum_{\tau \in G_{\varphi(P_2)}}\tau(P_2)$
\end{itemize}
are satisfied. 
\end{proposition}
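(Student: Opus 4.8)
The plan is to reduce the statement to Proposition~\ref{uniqueness}. First I would establish the "only if" direction: suppose $\tilde\sigma$ is a linear transformation of $\mathbb P^2$ with $\varphi^{-1}\circ\tilde\sigma\circ\varphi=\sigma$. Then $\tilde\sigma$ preserves the curve $\varphi(X)$, hence permutes its inner Galois points, and it must carry the line structure attached to a Galois point to that attached to its image. Since $\sigma\in G_{\varphi(P_1)}$ fixes the fibers of the projection $\pi_{\varphi(P_1)}$, the point $\varphi(P_1)$ is the unique base point of the pencil $\Lambda_1$ preserved by $\tilde\sigma$, so $\tilde\sigma(\varphi(P_1))=\varphi(P_1)$, which (since $\varphi$ is birational onto its image and $\varphi(P_1)$ is a smooth point) forces $\sigma(P_1)=P_1$, giving (a). Because $\tilde\sigma$ sends $\varphi(P_2)$ to $\varphi(\sigma(P_2))=\varphi(P_3)$ and a linear transformation carries Galois points to Galois points with conjugate Galois groups, $\varphi(P_3)$ is an inner Galois point, giving (b), and $G_{\varphi(P_3)}=\sigma G_{\varphi(P_2)}\sigma^{-1}$. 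Finally, the divisor $P_2+\sum_{\tau\in G_{\varphi(P_2)}}\tau(P_2)$ is (the pullback of) the tangent-line divisor of $\varphi(X)$ at $\varphi(P_2)$ coming from the line $\overline{\varphi(P_1)\varphi(P_2)}$; applying $\tilde\sigma$ linearly sends this to the corresponding divisor at $\varphi(P_3)$, i.e. $P_3+\sum_{\gamma\in G_{\varphi(P_3)}}\gamma(P_3)$, and pulling back by $\sigma$ yields (c).

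For the "if" direction, I would assume (a), (b), (c) and construct $\tilde\sigma$. The idea is that conditions (a)--(c) are exactly what is needed so that $\sigma$ matches up the two data sets $(G_{\varphi(P_1)}, G_{\varphi(P_2)}, P_1, P_2)$ and $(G_{\varphi(P_1)}, G_{\varphi(P_3)}, P_1, P_3)$ after transport. More precisely, $\sigma$ carries the $4$-tuple $(G_{\varphi(P_1)},G_{\varphi(P_2)},P_1,P_2)$ to the $4$-tuple $(\sigma G_{\varphi(P_1)}\sigma^{-1},\, \sigma G_{\varphi(P_2)}\sigma^{-1},\, \sigma(P_1),\, \sigma(P_2))=(G_{\varphi(P_1)},\, G_{\varphi(P_3)},\, P_1,\, P_3)$, using (a) for the fixed point, (b) and (c) to identify the group and divisor attached to the second Galois point (one checks via condition (c) and the characterization of Galois divisors that $\sigma G_{\varphi(P_2)}\sigma^{-1}=G_{\varphi(P_3)}$, consistently with (b)). Both $\varphi$ and $\varphi\circ\sigma$ are then birational embeddings realizing, respectively, these two $4$-tuples as pairs of inner Galois points in the sense of Fact~\ref{criterion1}. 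By Proposition~\ref{uniqueness} (applicable since $\deg\varphi(X)=|G_{\varphi(P_1)}|+1\ge 4$, so $|G_{\varphi(P_1)}|\ge 3$), the morphism attached to a given $4$-tuple is unique up to projective equivalence; since $\sigma$ transports one tuple to the other, there exists a linear transformation $\tilde\sigma$ of $\mathbb P^2$ with $\varphi\circ\sigma=\tilde\sigma\circ\varphi$, i.e. $\varphi^{-1}\circ\tilde\sigma\circ\varphi=\sigma$.

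The technical core, and the step I expect to be the main obstacle, is verifying that conditions (a)--(c) really do imply that $\sigma$ induces an isomorphism of the two tuples \emph{as data entering Fact~\ref{criterion1}} — in particular that $\sigma G_{\varphi(P_2)}\sigma^{-1}$ coincides with the group $G_{\varphi(P_3)}$ dictated by the birational embedding, and not merely with an abstract conjugate. This is where condition (c) does the real work: the divisor $P_j+\sum_{\sigma\in G_i}\sigma(P_j)$ is precisely the intersection divisor cut on $X$ by the line $\overline{\varphi(P_i)\varphi(P_j)}$, so (c) says that $\sigma$ sends the line $\overline{\varphi(P_1)\varphi(P_2)}\cap\varphi(X)$ to $\overline{\varphi(P_1)\varphi(P_3)}\cap\varphi(X)$ compatibly with the Galois structure, which pins down the group. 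I would carry this out by invoking the description, implicit in Facts~\ref{criterion1} and~\ref{criterion2} and made explicit in the proof of Proposition~\ref{uniqueness}, of how the linear system and hence $\varphi$ is reconstructed from the tuple, and checking functoriality of that reconstruction under the action of $\sigma$. The remaining bookkeeping — that $\deg\varphi(X)\ge 4$ gives $|G_{\varphi(P_1)}|\ge 3$ so Proposition~\ref{uniqueness} applies, and that a linear transformation necessarily preserves all the Galois-theoretic structure in the "only if" direction — is routine.
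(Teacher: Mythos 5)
Your ``only if'' direction is essentially the paper's argument and is fine: $\tilde{\sigma}$ preserves the general lines through $\varphi(P_1)$, hence fixes $\varphi(P_1)$; it carries Galois points to Galois points; and it carries the tangent line at $\varphi(P_2)$ to the tangent line at $\varphi(P_3)$, giving (a), (b), (c). The problem is in the ``if'' direction. Your plan is to apply Proposition \ref{uniqueness} to $\varphi$ and $\varphi\circ\sigma$, and, as you yourself flag, this requires proving that $\varphi\circ\sigma$ realizes the \emph{same} $4$-tuple as $\varphi$, i.e.\ that $\sigma G_{\varphi(P_2)}\sigma^{-1}=G_{\varphi(P_3)}$. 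You leave this as ``one checks'', and the mechanism you sketch for it rests on a misreading of condition (c): the divisors $P_2+\sum_{\tau \in G_{\varphi(P_2)}}\tau(P_2)$ and $P_3+\sum_{\gamma \in G_{\varphi(P_3)}}\gamma(P_3)$ are the pullbacks of the \emph{tangent lines} at $\varphi(P_2)$ and $\varphi(P_3)$ (as you state correctly in your first paragraph), not the divisors cut out by the lines $\overline{\varphi(P_1)\varphi(P_2)}$ and $\overline{\varphi(P_1)\varphi(P_3)}$; by Fact \ref{inner-lemma} these lines are genuinely different. Moreover, ``transport of the line $\overline{\varphi(P_1)\varphi(P_2)}$'' cannot pin down the group: since $\sigma\in G_{\varphi(P_1)}$ and $\sigma(P_2)=P_3$, the three points $\varphi(P_1),\varphi(P_2),\varphi(P_3)$ already lie on one line, its pullback $D=P_1+\sum_{\tau\in G_{\varphi(P_1)}}\tau(P_2)=P_1+\sum_{\tau\in G_{\varphi(P_1)}}\tau(P_3)$ is automatically $\sigma^*$-invariant by (a), and invariance of a single fiber says nothing about the Galois group of the projection. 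So as written there is a genuine gap at exactly the step you identified as the core.

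The needed claim is true and can be supplied from (a), (c) and Fact \ref{inner-lemma}: the pencil cut on $X$ by the lines through $\varphi(P_3)$ is spanned by the two distinct members $D$ and $P_3+\sum_{\gamma\in G_{\varphi(P_3)}}\gamma(P_3)$ (distinct by Fact \ref{inner-lemma}, using (b) and $\deg\varphi(X)\ge 4$), and $\sigma^*$ sends these to $D$ and $P_2+\sum_{\tau\in G_{\varphi(P_2)}}\tau(P_2)$, which span the pencil cut by the lines through $\varphi(P_2)$; hence $\sigma^*$ identifies the subfields $(\pi_{\varphi(P_3)}\circ\varphi)^*k(\mathbb{P}^1)$ and $(\pi_{\varphi(P_2)}\circ\varphi)^*k(\mathbb{P}^1)$, so $\sigma G_{\varphi(P_2)}\sigma^{-1}=G_{\varphi(P_3)}$, and your reduction to Proposition \ref{uniqueness} then produces $\tilde{\sigma}$ with $\varphi\circ\sigma=\tilde{\sigma}\circ\varphi$. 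Note that the paper's own proof avoids the conjugation claim altogether: by (b) and the proof of Proposition \ref{uniqueness}, the linear system $\Lambda$ defining $\varphi$ is the smallest one containing $D$, $P_1+\sum_{\tau\in G_{\varphi(P_1)}}\tau(P_1)$ and $P_3+\sum_{\gamma\in G_{\varphi(P_3)}}\gamma(P_3)$; condition (a) makes the first two divisors $\sigma^*$-invariant, condition (c) sends the third to a member of $\Lambda$, whence $\sigma^*\Lambda=\Lambda$ and $\sigma$ descends to a linear transformation. Either route works, but yours needs the extra lemma above spelled out correctly.
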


\begin{corollary} \label{total flexes}
Let $\varphi(P_1), \varphi(P_2)$ and $\varphi(P_3)$ be different inner Galois points, and let $\sigma \in G_{\varphi(P_1)}$ satisfy $\sigma(P_2)=P_3$. 
If $\varphi(P_1)$, $\varphi(P_2)$ and $\varphi(P_3)$ are total inflection points, then there exists a linear transformation $\tilde{\sigma}$ of $\mathbb{P}^2$ such that $\varphi^{-1}\circ\tilde{\sigma}\circ\varphi=\sigma$. 
\end{corollary}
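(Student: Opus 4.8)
The plan is to deduce the corollary from Proposition~\ref{extendable} by verifying its three numbered conditions; throughout I work under the standing hypothesis $\deg\varphi(X)\ge 4$ of Proposition~\ref{extendable} (for $\deg\varphi(X)\le 3$ the relevant Galois groups have order at most $2$ and the statement is immediate). Condition~(b) of Proposition~\ref{extendable}---that $\varphi(P_3)$ be an inner Galois point---holds at once, since all of $\varphi(P_1),\varphi(P_2),\varphi(P_3)$ are assumed inner Galois. The real content is to extract conditions~(a) and~(c) from the total inflection hypothesis.

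The key step is the claim: \emph{if $\varphi(P_i)$ is an inner Galois point that is also a total inflection point of $\varphi(X)$, then every element of $G_i:=G_{\varphi(P_i)}$ fixes $P_i$}. To prove it, set $d=\deg\varphi(X)$ and recall that $\pi_{\varphi(P_i)}\circ\varphi\colon X\to X/G_i\cong\mathbb{P}^1$ is a Galois covering of smooth curves with group $G_i$ of order $|G_i|=d-1$. Its fibres are the divisors $\varphi^*\ell-P_i$, where $\ell$ runs over the lines through $\varphi(P_i)$: indeed, since $\varphi$ is an isomorphism near the smooth point $\varphi(P_i)$, the pencil $\{\varphi^*\ell\}$ has $P_i$ as its only base point, with multiplicity one, and removing it yields the base-point-free pencil defining $\pi_{\varphi(P_i)}\circ\varphi$. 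Taking $\ell$ to be the tangent line to $\varphi(X)$ at $\varphi(P_i)$, the total inflection hypothesis gives $\varphi^*\ell=d\,P_i$, so the corresponding fibre equals $(d-1)P_i$; thus $P_i$ is a totally ramified point of this Galois covering, and at such a point the inertia group is all of $G_i$ (the residue fields being algebraically closed). Hence $G_i$ fixes $P_i$, and consequently $\sum_{\tau\in G_i}\tau(P_i)=(d-1)P_i$, i.e. $P_i+\sum_{\tau\in G_i}\tau(P_i)=d\,P_i$.

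Now I apply the claim. With $i=1$ it gives $\sigma(P_1)=P_1$, which is condition~(a). For condition~(c), the claim with $i=2,3$ rewrites the two divisors appearing there as $P_3+\sum_{\gamma\in G_{\varphi(P_3)}}\gamma(P_3)=d\,P_3$ and $P_2+\sum_{\tau\in G_{\varphi(P_2)}}\tau(P_2)=d\,P_2$. Since $\sigma(P_2)=P_3$, we have $\sigma^*(P_3)=\sigma^{-1}(P_3)=P_2$, so $\sigma^*(d\,P_3)=d\,P_2$; this is condition~(c). With (a), (b), (c) in hand, Proposition~\ref{extendable} furnishes a linear transformation $\tilde\sigma$ of $\mathbb{P}^2$ with $\varphi^{-1}\circ\tilde\sigma\circ\varphi=\sigma$.

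The only non-formal ingredient is the key claim of the second paragraph; I expect that to be the main point. It combines the total inflection condition with the description of the projection from an inner Galois point as a Galois covering and with the elementary fact that the inertia group at a totally ramified place of a Galois covering of curves is the full group (valid in arbitrary characteristic). Everything else is bookkeeping with divisors.
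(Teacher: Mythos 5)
Your proof is correct and takes essentially the same route as the paper: verify conditions (a), (b), (c) of Proposition~\ref{extendable}, using that a total inflection at an inner Galois point makes the fibre of the projection over the tangent direction totally ramified, so the whole group fixes the point (the paper cites \cite[III.8.2]{stichtenoth} for exactly this), and then apply $\sigma^*$ to $dP_3=P_3+\sum_{\gamma}\gamma(P_3)$ to get $dP_2=P_2+\sum_{\tau}\tau(P_2)$. Your opening aside about $\deg\varphi(X)\le 3$ is unnecessary (the corollary is stated in the setting of Proposition~\ref{extendable}, where $\deg\varphi(X)\ge 4$ is assumed) and its claim of immediacy is not really justified, but nothing in the main argument depends on it.
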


\section{Preliminaries} 

We recall the criterion presented in \cite{fukasawa2} for two Galois points. 

\begin{fact} \label{criterion1} 
Let $G_1$ and $G_2$ be finite subgroups of ${\rm Aut}(X)$ and let $P_1$ and $P_2$ be different points of $X$.
Then, three conditions
\begin{itemize}
\item[(a)] $X/{G_i} \cong \Bbb P^1$ for $i=1, 2$,    
\item[(b)] $G_1 \cap G_2=\{1\}$, and
\item[(c)] $P_1+\sum_{\sigma \in G_1} \sigma (P_2)=P_2+\sum_{\tau \in G_2} \tau (P_1)  $
\end{itemize}
are satisfied, if and only if there exists a birational embedding $\varphi: X \rightarrow \mathbb P^2$ of degree $|G_1|+1$ such that $\varphi(P_1)$ and $\varphi(P_2)$ are different inner Galois points for $\varphi(X)$ and $G_{\varphi(P_i)}=G_i$ for $i=1, 2$. 
\end{fact}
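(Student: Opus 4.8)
The plan is to prove both implications by translating the Galois condition into facts about projections and linear systems. Throughout I write $\pi_Q$ for the projection from a point $Q\in\mathbb{P}^2$, and I use two standard facts about an inner Galois point $Q$ of a plane curve $\varphi(X)$ birational to $X$: the composite $\pi_Q\circ\varphi\colon X\to\mathbb{P}^1$ is a finite cover of degree $\deg\varphi(X)-1$ with Galois group $G_Q$; and each $\gamma\in G_Q$, viewed in ${\rm Aut}(X)$, satisfies $\pi_Q\circ\varphi\circ\gamma=\pi_Q\circ\varphi$, so $\varphi(R)$ and $\varphi(\gamma(R))$ are collinear with $Q$ for every $R\in X$.

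For the ``only if'' direction, I would start from a birational embedding $\varphi$ of degree $|G_1|+1$ with inner Galois points $\varphi(P_1),\varphi(P_2)$ and $G_{\varphi(P_i)}=G_i$. The cover $\pi_{\varphi(P_i)}\circ\varphi$ then has degree $|G_i|$, factors through $X/G_i$, and has image $\mathbb{P}^1$, giving $X/G_i\cong\mathbb{P}^1$ (condition (a)) and $|G_1|=\deg\varphi(X)-1=|G_2|$. For (b), an element $\gamma\in G_1\cap G_2$ sends a general $R$ to a point lying on both $\overline{\varphi(P_1)\varphi(R)}$ and $\overline{\varphi(P_2)\varphi(R)}$; these lines being distinct, $\varphi(\gamma(R))=\varphi(R)$, so $\gamma(R)=R$ and $\gamma=1$. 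For (c), I would pull back the line $\ell=\overline{\varphi(P_1)\varphi(P_2)}$: the divisor $\varphi^*\ell$ has degree $|G_1|+1$ and contains $P_1$ with multiplicity one (since $\varphi(P_1)$ is smooth), while $\varphi^*\ell-P_1$ is the fiber of $\pi_{\varphi(P_1)}\circ\varphi$ over $[\ell]$; being a fiber of a $G_1$-Galois cover through $P_2$, that fiber equals $\sum_{\sigma\in G_1}\sigma(P_2)$. Symmetry in $P_1,P_2$ gives $\varphi^*\ell=P_1+\sum_{\sigma\in G_1}\sigma(P_2)=P_2+\sum_{\tau\in G_2}\tau(P_1)$, which is (c).

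For the ``if'' direction, assume (a), (b), (c). By (a) the quotient maps $\theta_i\colon X\to X/G_i\cong\mathbb{P}^1$ determine base-point-free pencils $\Lambda_i$ of degree $|G_i|$, whose member through a point $P$ is $\sum_{\sigma\in G_i}\sigma(P)$. I would set $D:=P_1+\sum_{\sigma\in G_1}\sigma(P_2)$, which by (c) also equals $P_2+\sum_{\tau\in G_2}\tau(P_1)$; comparing degrees gives $|G_1|=|G_2|$ and $\deg D=|G_1|+1$. Attaching the base point $P_i$ to $\Lambda_i$ produces a pencil $P_i+\Lambda_i$ all of whose members are linearly equivalent to $D$, so $P_1+\Lambda_1$ and $P_2+\Lambda_2$ are subsystems of $|D|$ sharing the member $D$; they are distinct, because a general member of $P_1+\Lambda_1$ misses $P_2$ whereas every member of $P_2+\Lambda_2$ contains it. Hence $\Lambda:=\langle P_1+\Lambda_1, P_2+\Lambda_2\rangle\subseteq|D|$ has dimension $2$, and its base locus is contained in $\{P_1\}\cap\{P_2\}=\emptyset$, so $\Lambda$ is base-point-free; let $\varphi\colon X\to\mathbb{P}^2$ be the associated morphism.

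Finally I would identify $\varphi$. In suitable coordinates, $\pi_{\varphi(P_i)}\circ\varphi$ is the map defined by $P_i+\Lambda_i$, namely $\theta_i$, so the subfield of $k(X)$ generated by the coordinate ratios of $\varphi$ is the compositum $k(X)^{G_1}\cdot k(X)^{G_2}$. Since $k(X)/k(X)^{G_1}$ is Galois, $k(X)$ is Galois over this compositum with group $\{\gamma\in G_1:\gamma|_{k(X)^{G_2}}={\rm id}\}=G_1\cap G_2=\{1\}$ by (b); hence the compositum is all of $k(X)$ and $\varphi$ is birational onto its image. Then $\deg\varphi(X)=\deg D=|G_1|+1$, the projection $\pi_{\varphi(P_i)}$ induces the $G_i$-Galois cover $\theta_i$ so $\varphi(P_i)$ is Galois with $G_{\varphi(P_i)}=G_i$, its multiplicity on $\varphi(X)$ equals $\deg\varphi(X)-\deg\theta_i=1$ so it is inner, and $\varphi(P_1)\ne\varphi(P_2)$ since distinct points of $X$ cannot share an image at a smooth point of $\varphi(X)$. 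I expect the genuine difficulty to be concentrated in this last direction: showing that $\dim\Lambda$ is exactly $2$ and, above all, that $\varphi$ is birational rather than a multiple cover of a plane curve of smaller degree — this is precisely where (c), which forces the two pencils into one complete linear system, and (b), which forces $k(X)^{G_1}\cdot k(X)^{G_2}=k(X)$, are used together and essentially.
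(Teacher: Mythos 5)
Your proof is correct and takes essentially the same route as the source cited for this Fact (\cite{fukasawa2}; the present paper reproduces no proof of it): your net spanned by the pencils $P_1+\Lambda_1$ and $P_2+\Lambda_2$ is exactly the map $(f:g:1)$ for generators $f,g$ of $k(X)^{G_1},k(X)^{G_2}$ used there and echoed in the paper's proofs of Theorem \ref{main} and Proposition \ref{uniqueness}, with birationality obtained in the same way from $G_1\cap G_2=\{1\}$ via the Galois correspondence, and the only-if direction likewise reduces to pulling back the line through the two Galois points. One cosmetic point: the claim that $\varphi^*\ell$ ``contains $P_1$ with multiplicity one'' needs that $\overline{\varphi(P_1)\varphi(P_2)}$ is not the tangent line at $\varphi(P_1)$ (the paper's Fact \ref{inner-lemma}, available for $\deg\varphi(X)\ge 4$), but since your identity $\varphi^*\ell-P_1=(\pi_{\varphi(P_1)}\circ\varphi)^*[\ell]$ holds in any case, condition (c) follows regardless.
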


\begin{fact} \label{criterion2} 
Let $G_1$ and $G_2$ be finite subgroups of ${\rm Aut}(X)$ and let $Q$ be a point of $X$.
Then, three conditions
\begin{itemize}
\item[(a)] $X/{G_i} \cong \Bbb P^1$ for $i=1, 2$,    
\item[(b)] $G_1 \cap G_2=\{1\}$, and
\item[(c')] $\sum_{\sigma \in G_1} \sigma (Q)=\sum_{\tau \in G_2} \tau (Q)  $
\end{itemize}
are satisfied, if and only if there exists a birational embedding $\varphi: X \rightarrow \mathbb P^2$ of degree $|G_1|$ and two outer Galois points $P_1$ and $P_2$ exist for $\varphi(X)$ such that $G_{\varphi(P_i)}=G_i$ for $i=1, 2$, and $\overline{P_1P_2} \ni Q$. 
\end{fact}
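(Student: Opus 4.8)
The plan is to prove the equivalence by working throughout with the dictionary between an outer Galois point and a Galois pencil. If $P$ is an outer Galois point of a degree-$d$ birational image $\varphi(X)$, then projection from $P$ gives a covering $\pi_P:\varphi(X)\to\mathbb P^1$ of degree exactly $d$ (it is $d$ and not $d-1$ precisely because $P\notin\varphi(X)$), and the composite $\pi_P\circ\varphi:X\to\mathbb P^1$ is the quotient morphism $X\to X/G_P$. Moreover, for any line $\ell$ through $P$, the pulled-back divisor $\varphi^*(\ell)$ is the fibre of this quotient over the point of $\mathbb P^1$ cut out by $\ell$, hence a full $G_P$-orbit. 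I would keep this correspondence central to both directions, since it converts the geometric hypotheses into statements about the two quotient maps $X\to X/G_i$.

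For the converse (existence of $\varphi$ and the $P_i$ implies the conditions) I would read off the three conditions directly. Condition (a) is immediate: since $P_i$ is Galois with group $G_i$, the subfield $\pi_{P_i}^*k(\mathbb P^1)$ is the fixed field $k(X)^{G_i}$, which is rational, so $X/G_i\cong\mathbb P^1$. For (b), birationality of $\varphi$ means a general point of $\varphi(X)$ is recovered as the intersection of the two lines $\overline{P_1y}$ and $\overline{P_2y}$, so the map $(\pi_{P_1},\pi_{P_2})$ is generically injective and the compositum $k(X)^{G_1}\cdot k(X)^{G_2}$ equals $k(X)$; inside the Galois extension $k(X)/k(X)^{\langle G_1,G_2\rangle}$ the Galois correspondence gives $k(X)^{G_1}\cdot k(X)^{G_2}=k(X)^{G_1\cap G_2}$, whence $G_1\cap G_2=\{1\}$. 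The crux is (c'): the single line $\ell=\overline{P_1P_2}$ passes through both outer Galois points and through $\varphi(Q)$, so by the dictionary the one divisor $\varphi^*(\ell)$ is simultaneously the $G_1$-orbit and the $G_2$-orbit of $Q$, and comparing the two expressions yields $\sum_{\sigma\in G_1}\sigma(Q)=\sum_{\tau\in G_2}\tau(Q)$.

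For the forward direction I would reverse this construction. Set $D=\sum_{\sigma\in G_1}\sigma(Q)$; using (a) the fields $k(X)^{G_1}$ and $k(X)^{G_2}$ are rational, and (c') lets me choose generators $f_1,f_2$ of them with the \emph{common} pole divisor $(f_i)_\infty=D$, because $D$ is a fibre of each quotient morphism (the fibre through $Q$). Then $1,f_1,f_2$ lie in the Riemann--Roch space $L(D)$, they are linearly independent, and the linear system they span is base-point-free since two of its members, $D$ and $(f_1)_0$, are already disjoint; hence $\varphi=(1:f_1:f_2):X\to\mathbb P^2$ is a morphism with $\varphi^*(\ell)\sim D$ of degree $|G_1|$ for a general line $\ell$. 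Birationality follows from $k(f_1,f_2)=k(X)^{G_1}\cdot k(X)^{G_2}=k(X)$ via (b), so $\deg\varphi(X)=|G_1|$. Projection from $P_1=(0:0:1)$ followed by $\varphi$ returns $f_1$, exhibiting $P_1$ as a Galois point with $G_{P_1}=G_1$, and likewise $P_2=(0:1:0)$ gives $G_2$; both are outer because each projection degree equals $\deg\varphi(X)$. Finally $\overline{P_1P_2}$ is the line $\{X_0=0\}$, whose pullback is $D$, and since $Q\in{\rm supp}\,D$ we obtain $\varphi(Q)\in\overline{P_1P_2}$.

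The step needing the most care is birationality in the forward direction: the geometric assertion that the two projections separate a general point must be matched with the algebraic identity $k(X)^{G_1}\cdot k(X)^{G_2}=k(X)^{G_1\cap G_2}$, and it is exactly here that hypothesis (b) is essential. By contrast, (c') enters only as the compatibility that allows the two Galois pencils to be amalgamated into a single net sharing the member $D$, and the outer-versus-inner distinction is settled purely by the degree bookkeeping $\deg\pi_{P_i}=\deg\varphi(X)=|G_i|$.
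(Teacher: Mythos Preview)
The paper does not give its own proof of this statement: it is recalled as a Fact from \cite{fukasawa2}, so there is no in-paper argument to compare against directly. Your proof is correct and follows precisely the construction the paper invokes (with reference to \cite{fukasawa2}) in the proof of Theorem~\ref{main-outer}: choose generators $f_1,f_2$ of $k(X)^{G_1},k(X)^{G_2}$ with common pole divisor $D=\sum_{\sigma\in G_1}\sigma(Q)=\sum_{\tau\in G_2}\tau(Q)$, set $\varphi=(1:f_1:f_2)$, and read off birationality from $k(X)^{G_1}\cdot k(X)^{G_2}=k(X)^{G_1\cap G_2}=k(X)$ and the outer Galois property from the degree count. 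One cosmetic remark: your appeal to ``the Galois extension $k(X)/k(X)^{\langle G_1,G_2\rangle}$'' is unnecessary (and $\langle G_1,G_2\rangle$ could be infinite in genus $\le 1$); the identity $k(X)^{G_1}\cdot k(X)^{G_2}=k(X)^{G_1\cap G_2}$ follows already from the finite Galois extension $k(X)/k(X)^{G_1}$, which suffices for both directions.
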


According to \cite[Lemma 2.5]{fukasawa1}, the following holds. 

\begin{fact} \label{inner-lemma}
Assume that $\deg \varphi(X) \ge 4$, and points $\varphi(P_1)$ and $\varphi(P_2)$ are distinct inner Galois points for $\varphi(X)$. 
Then, the line $\overline{\varphi(P_1)\varphi(P_2)}$ is different from the tangent line at $\varphi(P_1)$. 
In particular, $\sigma(P_1) \ne P_2$ for each automorphism $\sigma \in G_{\varphi(P_1)}$. 
\end{fact}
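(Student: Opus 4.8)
The plan is to argue by contradiction, working with divisors on $X$ and using the elementary fact that every fibre of a Galois covering of $\mathbb{P}^{1}$ is \emph{uniform}: all points of it occur with one and the same multiplicity. Write $C:=\varphi(X)$, $d:=\deg C\ge 4$, let $T$ be the tangent line to $C$ at $\varphi(P_{1})$, and suppose, for contradiction, that the line $\ell:=\overline{\varphi(P_{1})\varphi(P_{2})}$ equals $T$. Set $E:=\varphi^{*}T$, an effective divisor of degree $d$ on $X$. Because $\varphi$ is an isomorphism near $P_{1}$ (the point $\varphi(P_{1})$ being a smooth point of $C$) and $T$ is the tangent there, $P_{1}$ appears in $E$ with coefficient $e:=I_{\varphi(P_{1})}(T,C)\ge 2$; since $\varphi(P_{2})\in\ell=T$ and $\varphi(P_{2})\ne\varphi(P_{1})$, the point $P_{2}$ appears in $E$ with coefficient $f:=I_{\varphi(P_{2})}(T,C)\ge 1$.

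Next I would bring in the two Galois structures. The projection $\pi_{\varphi(P_{1})}$ realises $X\to\mathbb{P}^{1}$ as a Galois covering of degree $d-1$ with group $G_{\varphi(P_{1})}$, and the point of $\mathbb{P}^{1}$ corresponding to the line $T$ has fibre divisor $E-P_{1}$ (the single subtracted copy of $P_{1}$ records the base point $\varphi(P_{1})$ of the pencil of lines). Uniformity of this fibre forces all coefficients of $E-P_{1}$ to agree; comparing the coefficient of $P_{1}$, namely $e-1$, with that of $P_{2}$, namely $f$, gives $f=e-1$. Applying the same reasoning to $\pi_{\varphi(P_{2})}$, whose fibre over the point corresponding to $\ell=T$ is $E-P_{2}$, and comparing the coefficient of $P_{1}$, namely $e$, with that of $P_{2}$, namely $f-1$, would give $e=f-1$; together with $f=e-1$ this is impossible. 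The only point where this symmetric argument is incomplete is the case $f=1$, in which $P_{2}$ has coefficient $0$ in $E-P_{2}$ and the second relation is not available: there $f=e-1$ forces $e=2$, so $E=2P_{1}+P_{2}+E'$ with $E'$ effective of degree $d-3\ge 1$; thus $E$ (hence $E-P_{1}$) has a point $R$ in its support distinct from $P_{1}$ and $P_{2}$, whose coefficient in $E-P_{1}$ equals $e-1=1$ by the first uniformity, hence whose coefficient in $E-P_{2}$ is also $1$, whereas $P_{1}$ has coefficient $e=2$ in $E-P_{2}$ --- again contradicting uniformity of a fibre of $\pi_{\varphi(P_{2})}$. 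In every case the assumption $\ell=T$ is untenable, which proves the first assertion.

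Finally, for the ``in particular'' clause: if $\sigma\in G_{\varphi(P_{1})}$ satisfied $\sigma(P_{1})=P_{2}$, then, as $G_{\varphi(P_{1})}$ permutes the fibres of $\pi_{\varphi(P_{1})}\circ\varphi$ and $P_{1}$ lies in the fibre over the tangent direction at $\varphi(P_{1})$, so would $P_{2}$; hence $\varphi(P_{2})\in T$, and since $\varphi(P_{2})\ne\varphi(P_{1})$ this forces $\overline{\varphi(P_{1})\varphi(P_{2})}=T$, contrary to what was just shown. The substantive step is the handling of the edge case $f=1$: away from it the ``uniform fibre'' input is perfectly symmetric in $\varphi(P_{1})$ and $\varphi(P_{2})$, and it is exactly when $\ell$ meets $C$ transversally at $\varphi(P_{2})$ that one must invoke $\deg C\ge 4$ to produce the auxiliary intersection point $R$; the rest is bookkeeping.
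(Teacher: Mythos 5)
Your argument is correct, and it is essentially the standard proof of the cited statement (the paper itself gives no proof of Fact \ref{inner-lemma}, quoting it from \cite[Lemma 2.5]{fukasawa1}): one compares the coefficients of $P_1$ and $P_2$ in $\varphi^*T-P_1$ and $\varphi^*T-P_2$, using that a fibre of a Galois covering has all ramification indices equal. You correctly identify and handle the only delicate point, the transversal case $f=1$, where the hypothesis $\deg\varphi(X)\ge 4$ is needed to produce the auxiliary point $R$, and the deduction of the ``in particular'' clause from the first assertion is also sound.
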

\section{Proof of Theorems \ref{main} and \ref{main-outer}}

\begin{proof}[Proof of Theorem \ref{main}]
We consider the if-part. 
It follows from conditions (a) and (b) in Fact \ref{criterion1} that conditions (a) and (b) are satisfied. 
By Fact \ref{criterion1}(c), since $\varphi(P_1), \varphi(P_2)$ and $\varphi(P_3)$ are collinear Galois points, condition (c) is satisfied. 
Let $\Lambda' \subset |D|$ be the (base-point-free) linear system induced by $\varphi$. 
Since $\varphi(P_i)$ is inner Galois, $P_i+\sum_{\sigma \in G_i}\sigma(P_i) \in \Lambda'$, for $i=1, 2, 3$. 
Therefore, $\dim \Lambda \le 2$. 
Condition (d) is satisfied. 

We consider the only-if part. 
By conditions (a), (b) and (c) and Fact \ref{criterion1}, for each $i, j$ with $i \ne j$, there exists a birational embedding $\varphi_{ij}: X \rightarrow \mathbb{P}^2$ such that $\varphi_{ij}(P_i)$ and $\varphi_{ij}(P_j)$ are inner Galois points for $\varphi_{ij}(X)$, $G_{\varphi_{ij}(P_i)}=G_i$ and $G_{\varphi_{ij}(P_j)}=G_j$. 
It follows from Fact \ref{inner-lemma} that 
$$ G_1P_1 \ne G_1 P_2, \mbox{ and } \sum_{\sigma \in G_1}\sigma(P_1) \ne \sum_{\sigma \in G_1} \sigma(P_2). $$
Then, by condition (a), there exists a function $f \in k(X) \setminus k$ such that 
$$ k(X)^{G_1}=k(f), \ (f)=\sum_{\sigma \in G_1}\sigma(P_1)-\sum_{\sigma \in G_1}\sigma(P_2)$$
(see also \cite[III.7.1, III.7.2, III.8.2]{stichtenoth}). 
Note that, by condition (c), $(f)_{\infty}=D-P_1$. 
Similarly, there exist $g, h \in k(X) \setminus k$ such that 
$$ k(X)^{G_2}=k(g), \ (g)=\sum_{\tau \in G_2}\tau(P_2)-(D-P_2) $$
and 
$$ k(X)^{G_3}=k(h), \ (h)=\sum_{\gamma \in G_3}\gamma(P_3)-(D-P_3).$$ 
Then, $\varphi_{12}$ is represented by $(f:g:1)$ (see \cite[Proofs of Proposition 1 and of Theorem 1]{fukasawa2}). 
Let $\Lambda \subset |D|$ be as in condition (d), and let $\Lambda' \subset |D|$ be the sublinear system corresponding to $\langle f, g, 1 \rangle$.
Since $D, (f)+D, (g)+D \in \Lambda$, it follows that $\Lambda' \subset \Lambda$.  
By condition (d), $\Lambda'=\Lambda$. 
This implies that $P_3+\sum_{\gamma \in G_3}\gamma(P_3) \in \Lambda'$. 
Therefore, $h \in \langle f, g, 1\rangle$. 
Since the covering map $X \rightarrow X/G_3$ is represented by $\langle h, 1 \rangle$, this covering map coincides with the projection from some smooth point of $\varphi_{12}(X)$. 
Such a center of projection coincides with $\varphi_{12}(P_3)$, since the center is determined by ${\rm supp}(D) \cap {\rm supp}((h)+D)$. 
This implies that $\varphi_{12}(P_3)$ is an inner Galois point. 
By condition (c), points $\varphi_{12}(P_1)$, $\varphi_{12}(P_2)$ and $\varphi_{12}(P_3)$ are collinear. 
\end{proof}

\begin{proof}[Proof of Theorem \ref{main-outer}]
We consider the if-part. 
It follows from conditions (a) and (b) in Fact \ref{criterion2} that conditions (a) and (b) are satisfied. 
By Fact \ref{criterion2}(c'), since $P_1, P_2$ and $P_3$ are collinear outer Galois points, condition (c') is satisfied. 
Let $\Lambda' \subset |D|$ be the (base-point-free) linear system induced by $\varphi$. 
Since $P_i$ is outer Galois, the linear system corresponding to $X \rightarrow X/G_i \cong \mathbb{P}^1$ is contained in $\Lambda'$, for $i=1, 2, 3$. 
Therefore, $\dim \Lambda \le 2$. 
Condition (d') is satisfied. 

We consider the only-if part. 
By condition (a), there exists a function $f \in k(X) \setminus k$ such that 
$$ k(X)^{G_1}=k(f), \ (f)_{\infty}=\sum_{\sigma \in G_1}\sigma(Q)$$
(see also \cite[III.7.1, III.7.2, III.8.2]{stichtenoth}). 
Note that, by condition (c'), $(f)_{\infty}=D$. 
The sublinear system corresponding to $\langle 1, f \rangle \subset \mathcal{L}(D)$ coincides with $\Lambda_1 \subset |D|$ as in condition (d'). 
Similarly, there exist $g, h \in k(X) \setminus k$ such that 
$$ k(X)^{G_2}=k(g), \ k(X)^{G_3}=k(h), \mbox{ and } \ (g)_{\infty}=(h)_{\infty}=D. $$
Furthermore, the subspaces $\langle 1, g \rangle$ and $\langle 1, h \rangle$ correspond to the linear systems $\Lambda_2$ and $\Lambda_3$ as in condition (d'), respectively. Then, by conditions (b) and (c'), the morphism $\varphi$ represented by $(f:g:1)$ is birational onto its image and outer Galois points $P_1$ and $P_2$ exist for $\varphi(X)$ such that $G_{\varphi(P_i)}=G_i$ for $i=1, 2$ (see \cite[Proofs of Proposition 1 and of Theorem 1]{fukasawa2}). 
Let $\Lambda \subset |D|$ be as in condition (d'), and let $\Lambda' \subset |D|$ be the sublinear system corresponding to $\langle f, g, 1 \rangle$. 
Since $\Lambda_1, \Lambda_2 \subset \Lambda$, it follows that $\Lambda' \subset \Lambda$.  
By condition (d'), $\Lambda'=\Lambda$. 
This implies that $\Lambda_3 \subset \Lambda'$. 
Therefore, $h \in \langle f, g, 1\rangle$. 
Since the covering map $X \rightarrow X/G_3$ is represented by $\langle h, 1 \rangle$, this covering map coincides with the projection from some outer point $P_3 \in \mathbb{P}^2 \setminus \varphi(X)$. 
Then, $P_3$ is an outer Galois point. 
By condition (c'), points $P_1, P_2$ and $P_3$ are collinear. 
\end{proof}

\section{Proof of Propositions \ref{uniqueness} and \ref{extendable}}

\begin{proof}[Proof of Proposition \ref{uniqueness}]
We consider inner Galois points. 
Assume that condition (c) in Fact \ref{criterion1} is satisfied. 
Let $D:=P_1+\sum_{\sigma \in G_1}\sigma(P_2)=P_2+\sum_{\tau \in G_2}\tau(P_1)$. 
Note that, by Fact \ref{inner-lemma}, $P_1+\sum_{\sigma \in G_1}\sigma(P_1) \ne D$ and $P_1 \not\in {\rm supp}(P_2+\sum_{\tau \in G_2}\tau(P_2))$. 
The uniqueness of the linear system corresponding to a birational embedding follows, since a (base-point-free) linear system $\Lambda \subset |D|$ of dimension two such that 
$$D, \ P_1+\sum_{\sigma \in G_1}\sigma(P_1), \ P_2+\sum_{\tau \in G_2}\tau(P_2) \in \Lambda$$
is uniquely determined. 

We consider outer Galois points. 
Assume that condition (c') in Fact \ref{criterion2} is satisfied. 
Let $D:=\sum_{\sigma \in G_1}\sigma(Q)=\sum_{\tau \in G_2}\tau(Q)$, and let $\Lambda_i$ be the (base-point-free) linear system corresponding to the covering map $\pi_i: X \rightarrow X/G_i \cong \mathbb{P}^1$ for $i=1, 2$. 
Then, $D \in \Lambda_i$ and $\Lambda_i \subset |D|$. 
If $\pi_1$ and $\pi_2$ are realized as the projections from different outer Galois points for a birational embedding $\varphi: X \rightarrow \mathbb{P}^2$, then $\varphi$ is determined by a sublinear system $\Lambda \subset |D|$ such that $\dim \Lambda=2$ and $\Lambda_1 \cup \Lambda_2 \subset \Lambda$, up to a projective equivalence.  
Therefore, the uniqueness follows. 
\end{proof}

\begin{proof}[Proof of Proposition \ref{extendable}] 
Let $C:=\varphi(X)$. 
We consider the only-if part. 
Assume that there exists a linear transformation $\tilde{\sigma}$ of $\mathbb{P}^2$ such that $\varphi^{-1}\circ\tilde{\sigma}\circ\varphi=\sigma$.  
For a general line $\ell \ni \varphi(P_1)$, $C \cap \ell$ contains at least two points (since $\deg C \ge 3$), and $\tilde{\sigma}((C \cap \ell)\setminus \{\varphi(P_1)\}) \subset \ell$. 
Since $\tilde{\sigma}$ is a linear transformation, $\tilde{\sigma}(\ell)=\ell$ follows. 
This implies that $\tilde{\sigma}(\varphi(P_1))=\varphi(P_1)$. 
Condition (a) is satisfied. 
Since $\varphi(P_3)=\varphi(\sigma(P_2))=\tilde{\sigma}(\varphi(P_2))$, condition (b) is satisfied. 
Since the divisor $P_2+\sum_{\tau \in G_{\varphi(P_2)}}\tau(P_2)$ corresponds to the tangent line of $\varphi(X)$ at $\varphi(P_2)$, conditions (c) is also satisfied. 

We consider the if part. 
Let $\Lambda$ be the linear system corresponding to the birational embedding $\varphi: X \rightarrow \mathbb{P}^2$. 
As in the proof of Proposition \ref{uniqueness}, it follows from condition (b) that $\Lambda$ is the smallest linear system containing the divisors 
$$D, \ P_1+\sum_{\sigma \in G_{\varphi(P_1)}}\sigma(P_1), \ P_3+\sum_{\gamma \in G_{\varphi(P_3)}}\gamma(P_3), $$ 
where $D:=P_1+\sum_{\sigma \in G_{\varphi(P_1)}}\sigma(P_3)=P_3+\sum_{\gamma \in G_{\varphi(P_3)}}\gamma(P_1)$. 
By condition (a), divisors $D$ and $P_1+\sum_{\sigma \in G_{\varphi(P_1)}}\sigma(P_1)$ are invariant under the action of $\sigma^*$. 
Since $P_2+\sum_{\tau \in G_{\varphi(P_2)}}\tau(P_2) \in \Lambda$, by condition (c), it follows that $\sigma^*\Lambda=\Lambda$. 
\end{proof} 

\begin{proof}[Proof of Corollary \ref{total flexes}] 
We prove that conditions (a), (b) and (c) in Proposition \ref{extendable} are satisfied. 
Since $\varphi(P_1)$ is a total inflection point, by \cite[III.8.2]{stichtenoth}, condition (a) is satisfied.  
Condition (b) is satisfied by the assumption. 
Since $\varphi(P_3)$ is a total inflection point, it follows from \cite[III.8.2]{stichtenoth} that  
$$ P_3+\sum_{\gamma \in G_{\varphi(P_3)}}\gamma(P_3)=(|G_3|+1) P_3. $$
Therefore, 
$$ \sigma^*\left(P_3+\sum_{\gamma \in G_{\varphi(P_3)}}\gamma(P_3)\right)=(|G_2|+1)P_2= P_2+\sum_{\tau \in G_{\varphi(P_2)}}\tau(P_2). $$
Condition (c) is satisfied.  
\end{proof}

\begin{center} {\bf Acknowledgements} \end{center} 
The author is grateful to Doctor Kazuki Higashine for helpful discussions.

\end{document}